\newtheorem{theorem}{Theorem}
\newtheorem{proposition}[theorem]{Proposition}
\newtheorem{lemma}[theorem]{Lemma}
\newtheorem{corollary}[theorem]{Corollary}
\theoremstyle{definition}
\newtheorem{remark}[theorem]{Remark}
\newtheorem{example}[theorem]{Example}
\newcommand{\alt}[1]{{\sf A}_{#1}}
\newcommand{\sy}[1]{{\sf S}_{#1}}
\newcommand{\core}[2]{{\sf Core}_{#1}(#2)}
\newcommand{\sym}[1]{{\sf Sym}\,#1}
\renewcommand{\wr}{\,{\sf wr}\,}
\newcommand{\aut}[1]{{\sf Aut}\,{#1}}
\newcommand{\out}[1]{{\sf Out}(#1)}
\newcommand{\rad}[1]{{\sf Rad}{#1}}
\newcommand{\socle}[1]{{\sf Soc}{#1}}
\newcommand{\psl}[2]{\mbox{\sf PSL}_{#1}(#2)}
\newcommand{\F}{\mathbb F}
\renewcommand{\leq}{\leqslant}
\renewcommand{\geq}{\geqslant}
\font\tenvr=cmmi10 scaled 1600
\renewcommand{\wr}
    {\mathrel{\mkern-1mu\mathchar"0F7B\mkern-1mu}}
\begin{document}

\title[Solvable subgroups of prime-power index]{Groups with a solvable subgroup of prime-power index}
\author{Raimundo Bastos}
\address[R. Bastos]{ Departamento de Matem\'atica, Universidade de Bras\'ilia, Campus Universit\'{a}rio Darcy Ribeiro, Brasilia, DF, 70910-900, Brazil \\ bastos@mat.unb.br}
\author{Csaba Schneider}
\address[C. Schneider]{Departamento de Matem\'atica\\
Instituto de Ci\^encias Exatas\\
Universidade Federal de Minas Gerais\\
Av.\ Ant\^onio Carlos 6627\\
Belo Horizonte, MG, Brazil\\
csaba@mat.ufmg.br\\
 www.mat.ufmg.br/$\sim$csaba}
\subjclass[2010]{20D05; 20D10; 20D20}
\keywords{Finite groups; $\pi$-subgroups; Solvable groups}

\begin{abstract}
In this paper we describe some properties of groups $G$ that contain a solvable subgroup of finite prime-power index (Theorem \ref{thm:solvable} and Corollaries \ref{cor:cor1}--\ref{cor:cor2}).   We prove that if $G$ is a non-solvable group that contains a solvable subgroup of index $p^{\alpha}$ (for some prime $p$), then the quotient $G/\rad(G)$ of $G$ over the solvable radical is asymptotically small in comparison to $p^{\alpha}!$ (Theorem \ref{thm:rad(G)}). 
\end{abstract}

\date{\today}
\subjclass[2010]{}

\thanks{}
\maketitle

\section{Introduction}

In this paper we explore some properties of groups with solvable subgroups of $p$-power index for some prime $p$. 
If the group is finite, then the solvable subgroup of $p$-power index has a Hall $p'$-subgroup which is 
a Hall $p'$-subgroup in the whole group. Thus, in the case of finite groups, our condition is equivalent to requiring that the group contains a solvable 
Hall $p'$-subgroup. There are several well-known results concerning the solvability of finite groups 
assuming the existence of 
certain Hall $p'$-subgroups. The most famous of these is Hall's Theorem that states that a finite 
group is solvable if and only if it contains Hall $p'$-subgroups for all $p$. Furthermore, in a finite solvable group, 
for a fixed $p$, the Hall $p'$-subgroups are conjugate (see \cite[9.1.7--9.1.8]{robinson}). 
Later Wielandt proved that  a finite group is solvable if it possesses three solvable subgroups whose indices are pairwise relatively prime (see for instance \cite[Lemma 11.25]{rose}). In particular, if a finite group $G$ contains solvable Hall $p'$-subgroups for three different primes, then $G$ is solvable. Arad and Ward extended Wielandt's result to show that if $G$ contains a Hall $2'$-subgroup and a Hall $3'$-subgroup, then $G$ is solvable \cite{arad, aw} (see also the work by Herzog, Longobardi, Maj and Mann \cite{hlmm}). Carocca and Matos showed that if $p$ is an odd prime and $G$ is a finite group with a $2$-nilpotent subgroup of $p$-power index, then $G$ is solvable (see \cite[Theorem A]{cm}). More solvability criteria in terms of some (solvable) subgroups can be found in \cite{af,DGHP,HLM,kegel,mt,wielandt}. 

Our first theorem implies, for certain primes $p$, the solvability of finite groups $G$ under the 
condition that $G$ has a solvable Hall $p'$-subgroup. 
Fermat primes are prime numbers of the form $2^m+1$. To simplify notation, 
we set 
\begin{equation}\label{eq:pi}
\pi_0=\{2,7,13\}\cup \{p\mid \mbox{$p$ is a Fermat prime}\}.
\end{equation}

\begin{theorem} \label{thm:solvable}
Let $G$ be a finite group, let $p$ be a prime number and  assume that $G$ contains a solvable  subgroup
of index $p^\alpha$ for some $\alpha\geq 1$. Then $G$ contains a solvable Hall $p'$-subgroup and the following assertions are valid.
\begin{enumerate} 
\item If $p\not\in \pi_0$, then $G$ is solvable.
\item If $p\neq 7,13$, then the Hall $p'$-subgroups of $G$ are conjugate.
\end{enumerate}
\end{theorem}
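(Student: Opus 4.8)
The plan is to dispose of the existence claim directly and then reduce both (1) and (2) to a question about finite simple groups, where I would invoke the classification of subgroups of prime-power index. For the existence of a solvable Hall $p'$-subgroup, suppose $H\leq G$ is solvable with $[G:H]=p^\alpha$ and let $K$ be a Hall $p'$-subgroup of $H$ (these exist in the solvable group $H$ by Hall's theorem). Then $[H:K]=|H|_p$ is a power of $p$, so $[G:K]=[G:H]\,[H:K]$ is a power of $p$; since $K$ is a $p'$-group, it is a Hall $p'$-subgroup of $G$, and it is solvable as a subgroup of $H$.

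For (1) I would argue by minimal counterexample: let $G$ be a non-solvable group of least order admitting a solvable subgroup $K$ of $p$-power index with $p\notin\pi_0$. For any minimal normal subgroup $N$, the quotient $G/N$ inherits the hypothesis ($KN/N$ is solvable of $p$-power index), so by minimality $G/N$ is solvable; hence $N$ must be non-solvable, i.e.\ $N=T_1\times\cdots\times T_k$ with the $T_i$ isomorphic to a non-abelian simple group $T$. Then $L=K\cap N$ is solvable and $[N:L]$ divides $[G:K]$, hence is a power of $p$; projecting to the first factor, $\pi_1(L)$ is a solvable subgroup of $T$ whose index $[T:\pi_1(L)]$ divides $[N:L]$ (because $L\leq \pi_1(L)\times T_2\times\cdots\times T_k$) and so is a power of $p$, positive since $T$ is non-solvable. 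Thus it suffices to show that a non-abelian simple group with a solvable subgroup of $p$-power index forces $p\in\pi_0$.

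At this point I would feed $T$ into Guralnick's theorem on subgroups of prime-power index in simple groups, which (via CFSG) limits $T$ to: $\alt{n}$ with $n=p^a$; some $\psl{n}{q}$ with a point- or hyperplane-stabilizer of index $(q^n-1)/(q-1)$; or one of $\psl{2}{11}$, $M_{11}$, $M_{23}$, $\mathrm{PSU}_4(2)$. Imposing solvability of the subgroup, the alternating case forces $\alt{n-1}$ solvable, hence $n=5$ and $p=5$. For $\psl{2}{q}$ the Borel subgroup is solvable of index $q+1=p^a$; writing $q=\ell^f$, if $\ell$ is odd then $q+1$ is even so $p=2$, while if $\ell=2$ then $2^f+1=p^a$ yields either a Fermat prime ($a=1$) or the exceptional $2^3+1=3^2$, giving $p=3$. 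For $\psl{n}{q}$ with $n\geq 3$ the parabolic is solvable only for $(n,q)\in\{(3,2),(3,3)\}$, producing $\psl{3}{2}$ (index $7$, $p=7$) and $\psl{3}{3}$ (index $13$, $p=13$); and in $M_{11},M_{23},\mathrm{PSU}_4(2)$ the subgroups of prime-power index ($M_{10}$, $M_{22}$, $2^4{:}\alt{5}$) are non-solvable, so no prime arises. The surviving primes are exactly $\{2,7,13\}\cup\{\text{Fermat primes}\}=\pi_0$, contradicting $p\notin\pi_0$ and proving (1).

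For (2) I would apply the reduction theory for conjugacy of Hall subgroups, which reduces the property $C_{p'}$ (all Hall $p'$-subgroups conjugate) to the composition factors, where solvable factors are handled by Hall's theorem. It then remains to check $C_{p'}$ for the simple groups above with $p\neq 7,13$: in the $\psl{2}{q}$ (with $p=2$ or a Fermat prime) and $\alt{5}$ cases the solvable Hall $p'$-subgroups are Borel subgroups, respectively copies of $\alt{4}$, which form a single class (e.g.\ via Dickson's list of subgroups of $\mathrm{PSL}_2$). The obstruction is precisely $\psl{3}{2}$ and $\psl{3}{3}$, each of which has two classes of point- and hyperplane-stabilizers of index $7$ and $13$ — non-conjugate but interchanged by the graph automorphism — giving non-conjugate solvable Hall $7'$- and $13'$-subgroups, which is exactly why these primes are excluded. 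The \emph{main obstacle} is the passage through Guralnick's list: deciding solvability of the parabolics and performing the number-theoretic analysis of $q+1=p^a$ (eliminating spurious primes and isolating the exceptional $\psl{2}{8}$) to recover precisely $\pi_0$, together with the per-group conjugacy count in which the duality between points and hyperplanes in $\psl{3}{2}$ and $\psl{3}{3}$ is the decisive phenomenon.
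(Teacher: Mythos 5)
Your existence argument and your part (1) are essentially the paper's own proof: the paper packages your minimal-counterexample reduction as an induction on $|G|$ (Proposition~\ref{prop:compfact}), routed through a lemma on characteristically simple groups (Lemma~\ref{lem:charsimp}) whose projection argument is exactly yours, and your processing of Guralnick's list --- solvability of $\alt{n-1}$ only for $n=5$, solvability of the parabolic via the section $\psl{n-1}{q}$, and the Catalan-type arithmetic of $q+1=p^a$ isolating $\psl 28$ and the Fermat primes --- is precisely Lemma~\ref{lem:gur} combined with Lemma~\ref{lem:nt}. (Two routine omissions there: you list $\psl 2{11}$ among Guralnick's exceptions but never discard it --- its index-$11$ subgroup $\alt 5$ is non-solvable --- and the coprimality of $|H|$ and $|T:H|$, needed so that the line stabilizers really are Hall $p'$-subgroups, is a computation the paper does in Corollary~\ref{cor:gur}(3).)

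Where you genuinely diverge is part (2). The paper proves conjugacy by a self-contained induction on $|G|$, splitting on a minimal normal subgroup $M$: if $M$ is an elementary abelian $p$-group it conjugates inside the solvable group $K_1^gM=K_2M$ by Hall's theorem; if $M$ is an elementary abelian $r$-group with $r\neq p$ it absorbs $M$ into both Hall subgroups and passes to $G/M$; if $M$ is non-abelian it conjugates the intersections inside $M$ (Lemma~\ref{lem:charsimp}) and then passes to the proper subgroup $N_G(K_1\cap M)$. You instead invoke the classical reduction theory for the property $C_{p'}$ --- Chunikhin's theorem that $C_\pi$ is closed under extensions, hence holds whenever all composition factors satisfy it --- and then only verify single conjugacy classes in the relevant simple groups via Dickson. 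This is a legitimate and more modular route: it cleanly separates a general reduction theorem from a finite list of simple-group checks, whereas the paper in effect reproves the special case of that reduction it needs (its three-case induction is the standard Frattini/Schur--Zassenhaus argument in disguise). What it costs you is: (i) the imported theorem is nontrivial and, in full generality, itself rests on Feit--Thompson; (ii) to feed the composition factors into it you need that \emph{every} non-cyclic composition factor of $G$ is on your list, i.e., the full strength of Proposition~\ref{prop:compfact}, which your minimal-counterexample formulation of (1) does not literally provide --- it must be rerun as an induction covering both $N$ and $G/N$; and (iii) the simple-group check must cover Hall $p'$-subgroups that are not a priori solvable (for $p=2$ they are solvable by Feit--Thompson; for odd $p$ one checks via Dickson that any subgroup of the right order is a Borel, resp.\ $\alt 4$), so your phrase ``the solvable Hall $p'$-subgroups form a single class'' is slightly weaker than what is required. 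None of these is a fatal gap; they are the steps you would have to write out to make the argument complete.
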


The proof of Theorem~\ref{thm:solvable} is based on Guralnick's classification \cite{gur} of the finite 
simple groups containing a subgroup of prime-power index. From this classification, it is easy to obtain  
a classification of finite simple groups with a solvable subgroup of prime-power index (see Lemma~\ref{lem:gur}). Guralnick's classification and its consequences for finite simple and characteristically simple groups with a solvable subgroup of prime-power index will be explored in 
Section~\ref{sec:simple}.

Theorem~\ref{thm:solvable} yields the following corollaries.

\begin{corollary}\label{cor:cor1}
If  $p$ is a prime such that $p\not\in\pi_0$ and $G$ is a not necessarily finite group with a solvable subgroup of $p$-power index, then $G$ is solvable.
\end{corollary}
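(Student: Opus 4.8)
The plan is to reduce to the finite case already settled by Theorem~\ref{thm:solvable}(1). Let $H\leq G$ be a solvable subgroup with $[G:H]=p^\alpha$. If $\alpha=0$ then $G=H$ is solvable and there is nothing to prove, so assume $\alpha\geq 1$. Although $G$ may be infinite, the key point is that $H$ has \emph{finite} index, so its normal core is a normal subgroup of finite index whose quotient is a finite group to which the theorem applies.

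First I would form the permutation action of $G$ on the (finite) coset space $G/H$, which yields a homomorphism $\varphi\colon G\to \sym{(G/H)}$ with $\sym{(G/H)}\cong\sym{p^\alpha}$ finite. Its kernel is the normal core $N=\core{G}{H}=\bigcap_{g\in G}H^g$. Then $N\trianglelefteq G$, and since $N\leq H$ with $H$ solvable, $N$ is solvable as well; moreover $G/N$ embeds into $\sym{p^\alpha}$ and is therefore finite.

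Next I would transport the hypothesis to the finite quotient $\bar G=G/N$. Because $N\leq H\leq G$, the correspondence theorem gives that $\bar H=H/N$ is a subgroup of $\bar G$ of index $[\bar G:\bar H]=[G:H]=p^\alpha$, and $\bar H$ is solvable, being a quotient of the solvable group $H$. Thus $\bar G$ is a finite group containing a solvable subgroup of index $p^\alpha$. Since $p\notin\pi_0$, Theorem~\ref{thm:solvable}(1) applies and shows that $\bar G$ is solvable.

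Finally, $G$ is an extension of the solvable normal subgroup $N$ by the solvable quotient $G/N=\bar G$, and an extension of a solvable group by a solvable group is solvable; hence $G$ is solvable. There is no serious obstacle here beyond this core reduction: the only things to verify are that passing to $G/N$ preserves finiteness (which comes from the finiteness of the coset space), the solvability of $N$ (from $N\leq H$), and that the index $p^\alpha$ is preserved (from $N\leq H$ via the correspondence theorem). Once these are in place the finite theorem does all the work.
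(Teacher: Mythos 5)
Your proposal is correct and takes essentially the same route as the paper: both arguments pass to the finite quotient $G/\core GH$ via the coset action, observe that $H/\core GH$ is a solvable subgroup of the same index $p^\alpha$, invoke the finite-group result (you cite Theorem~\ref{thm:solvable}(1), while the paper cites Proposition~\ref{prop:compfact}, from which that part of the theorem follows immediately), and finish using solvability of $\core GH\leq H$ together with extension closure of solvability.
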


A finite group $G$ is $p$-nilpotent if there exists a normal Hall $p'$-subgroup of $G$. 
It is an immediate consequence of the Feit--Thompson Theorem~\cite{ft} that a  finite $2$-nilpotent group is solvable.  For $p$ odd, a $p$-nilpotent group need not be solvable and part~(3) of the following corollary
gives a sufficient condition for the solvability of $p$-nilpotent groups. We denote by
$\pi(G)$ the set of primes that divide the order of a finite group $G$.

\begin{corollary} \label{cor:cor2}
Let $p$ and $q$ be different primes and let $G$ be a finite group. 
\begin{enumerate}
\item If  $\{p,q\} \neq \{2,7\}$  and 
$G$ contains a solvable Hall $p'$-subgroup and a solvable Hall $q'$-subgroup, then $G$ is solvable.
\item If $G$ contains a solvable Hall $p'$-subgroup with $p\neq 3$ and a Hall $3'$-subgroup, then $G$ is solvable. 
\item If $G$ is $p$-nilpotent,  $q\not\in \pi_0$ and $G$ contains a solvable Hall $\{p,q\}'$-subgroup, then 
$G$ is solvable.

\end{enumerate}
\end{corollary}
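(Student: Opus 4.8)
The plan is to deduce all three parts from Theorem~\ref{thm:solvable}, the classification of finite simple groups with a solvable subgroup of prime-power index (Lemma~\ref{lem:gur}, which rests on Guralnick's theorem \cite{gur}), and the Arad--Ward theorem \cite{arad,aw}. I will use throughout that a solvable Hall $p'$-subgroup is exactly a solvable subgroup of index a power of $p$, and that each hypothesis descends along normal subgroups and quotients: if $H$ is a (solvable) Hall $\pi$-subgroup of $G$ and $N\trianglelefteq G$, then $H\cap N$ is a (solvable) Hall $\pi$-subgroup of $N$ and $HN/N$ is one of $G/N$. Consequently, in a minimal counterexample $G$ to any of (1)--(3) there is a non-abelian minimal normal subgroup $N=S^k$ ($S$ simple) with $G/N$ solvable, and since a Hall $\pi$-subgroup of $S^k$ is a direct product of Hall $\pi$-subgroups of the factors (and solvability is inherited), the relevant hypotheses transfer to $S$. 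The whole question is thereby reduced to the simple group $S$.

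For (1), if $p\notin\pi_0$ or $q\notin\pi_0$ then Theorem~\ref{thm:solvable}(1) already gives that $G$ is solvable, so I may assume $p,q\in\pi_0$. Passing to the simple section $S$, it would carry solvable subgroups of $p$-power and of $q$-power index for two distinct primes. Inspection of Lemma~\ref{lem:gur} shows that the unique simple group admitting solvable subgroups of two different prime-power indices is $\psl{2}{7}\cong\psl{3}{2}$, through its Borel subgroup $7{:}3$ of index $8$ and its subgroup $S_4$ of index $7$, which forces $\{p,q\}=\{2,7\}$. Since this is excluded, no counterexample exists.

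For (2) I first dispose of $p\notin\pi_0$ by Theorem~\ref{thm:solvable}(1), and of $p=2$ by Arad--Ward, as then $G$ has a Hall $2'$-subgroup and a Hall $3'$-subgroup. There remains $p\in\pi_0\setminus\{2,3\}$, i.e.\ $p\in\{5,7,13,17,257,65537,\dots\}$. The simple section $S$ then has a solvable subgroup of $p$-power index, so by Lemma~\ref{lem:gur} it is one of $\psl{2}{4}\cong\alt{5}$ $(p=5)$, $\psl{2}{7}$ $(p=7)$, $\psl{3}{3}$ $(p=13)$, or $\psl{2}{2^{2^n}}$ for a Fermat prime $p=2^{2^n}+1$; moreover $S$ inherits a Hall $3'$-subgroup from $G$. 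The crux is that none of these groups has a Hall $3'$-subgroup, which is the desired contradiction: for the $\psl{2}{q}$ cases the $3$-part of $|S|$ is strictly smaller than the minimal index of a proper subgroup, so a Hall $3'$-subgroup (proper, of that index) cannot exist; for $\psl{3}{3}$ a Hall $3'$-subgroup would have order $2^4\cdot 13$ and, containing a normal Sylow $13$-subgroup, would lie inside its normalizer $13{:}3$ of order $39$, which is impossible.

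Part (3) is the most direct. As $G$ is $p$-nilpotent it has a normal Hall $p'$-subgroup $K$, and every $p'$-subgroup of $G$ lies in $K$; in particular the given solvable Hall $\{p,q\}'$-subgroup $H$ satisfies $H\leq K$. Since $|K:H|$ is the $q$-part of $|G|$, the subgroup $H$ is a solvable Hall $q'$-subgroup of $K$, that is, a solvable subgroup of $K$ of $q$-power index. As $q\notin\pi_0$, Theorem~\ref{thm:solvable}(1) applied to $K$ gives that $K$ is solvable, and since $G/K$ is a $p$-group, $G$ is solvable. The main obstacle in the whole corollary is the finite verification in (2) that the groups produced by Lemma~\ref{lem:gur} have no Hall $3'$-subgroup, which I expect to carry out uniformly over the (a priori infinite) Fermat family $\psl{2}{2^{2^n}}$ via the minimal-degree estimate and separately for $\psl{3}{3}$ by the Sylow-normalizer argument.
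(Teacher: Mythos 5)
Your proposal is correct. Part (3) is essentially identical to the paper's proof, and part (1) rests on the same key observation---the only simple group admitting solvable subgroups of prime-power index for two distinct primes is $\psl 27\cong\psl 32$, forcing $\{p,q\}=\{2,7\}$---except that you re-derive the reduction to simple groups via a minimal counterexample where the paper simply quotes Proposition~\ref{prop:compfact}. Part (2) is where you genuinely diverge. The paper never asks whether the simple groups of Lemma~\ref{lem:gur} possess Hall $3'$-subgroups; instead it observes that $H\cap K$ is a solvable Hall $\{3,p\}'$-subgroup of $G$, hence a solvable Hall $p'$-subgroup of the given Hall $3'$-subgroup $K$, so Proposition~\ref{prop:compfact} applies to $K$ itself: any non-cyclic composition factor of $K$ would lie in the list, and every group on that list has order divisible by $3$, contradicting $3\nmid|K|$. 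Thus $K$ is solvable and part (1) finishes the job---uniformly in $p$, including $p=2$, with no outside input. Your route instead reduces to a simple group $S$ and verifies that none of the candidates $\alt 5$, $\psl 27$, $\psl 33$, $\psl 2{2^{2^m}}$ has a Hall $3'$-subgroup, using the minimal permutation degree of $\psl 2q$ (equivalently, since the $3$-part of those orders is exactly $3$: no non-abelian simple group has a subgroup of index $3$) and the Sylow $13$-normalizer $13{:}3$ of $\psl 33$, while outsourcing $p=2$ to Arad--Ward. Both arguments are sound; yours establishes the stronger intermediate fact that these simple groups have no Hall $3'$-subgroup whatsoever and keeps (2) independent of (1), but it imports standard external facts (minimal degrees, normalizer orders, Arad--Ward) that the paper's intersection trick $H\cap K$ avoids, which is what makes the paper's treatment of (2) shorter and self-contained.
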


Theorem~\ref{thm:solvable} and Corollaries~\ref{cor:cor1}--\ref{cor:cor2} are proved in Section~\ref{sec:struct}.

If $G$ is a group  and $H$ is a solvable subgroup of $G$ with $|G:H|=m=p^{\alpha}$ for some prime $p$, then, noting that ${\sf Core}_G(H)$ is a solvable normal subgroup of $G$ and so is contained in $\rad(G)$ and considering the transitive $G$-action  on the 
coset space modulo $H$, we obtain that 
\[
|G/\rad(G)|\leqslant |G/\mbox{\sf Core}_G(H)|\leqslant m!.
\]
A more careful analysis, in Section~\ref{sec:bound}, of the structure of $G$ gives the following polynomial bound on the 
size of $G/\rad(G)$.

\begin{theorem} \label{thm:rad(G)}
If $p$ is a prime, and $G$ is a (not necessarily finite) group with a solvable subgroup $H$ such that $|G:H|=p^{\alpha}=m$, then $|G/\rad(G)| \leqslant m^5.$ Moreover, if $p\neq 13$, then $|G/\rad(G)|\leqslant m^4$. 
\end{theorem}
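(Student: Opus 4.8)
The plan is to reduce to a finite group with trivial solvable radical and then bound its order by a fixed power of the index of a solvable subgroup, reading off the exponent from the list of simple groups produced by Lemma~\ref{lem:gur}.

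First I would dispose of the infinite case. Since $\core{G}{H}$ is a solvable normal subgroup of finite index (it is the kernel of the action on the $m$ cosets of $H$), the group $G$ is virtually solvable, so $\rad(G)$ exists, is solvable, has finite index, and satisfies $G/\rad(G)\cong (G/\core{G}{H})/\rad(G/\core{G}{H})$. Hence it suffices to treat finite $G$. Replacing $G$ by $G/\rad(G)$, I may assume $\rad(G)=1$; the image of $H$ is then a solvable subgroup whose index divides $m$ and is a power of $p$, and it is enough to prove $|G|\leq |G:H|^{5}$ (resp.\ $|G:H|^{4}$), since $|G:H|\leq m$.

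Now $\rad(G)=1$ forces $\soc(G)$ to be a direct product $\prod_i S_i$ of non-abelian minimal normal subgroups $S_i=T_i^{\,n_i}$ with $T_i$ non-abelian simple, and $C_G(\soc(G))=1$, so that $G$ embeds in $\prod_i(\aut{T_i}\wr\sym{n_i})$ and $|G|\leq\prod_i |\aut{T_i}|^{n_i}\,|\bar G_i|$, where $\bar G_i\leq\sym{n_i}$ is the transitive permutation group induced by $G$ on the $n_i$ simple factors of $S_i$. Two index estimates drive the argument. Writing $d_i$ for the least index of a solvable subgroup of $T_i$, the coordinate projections $\rho_j$ show that every solvable subgroup of $T_i^{\,n_i}$ has index at least $d_i^{\,n_i}$; letting $f_i$ be the $p$-power index in $S_i$ of the projection of $H\cap\soc(G)$, and $\bar H_i\leq\bar G_i$ the solvable image of $H$, one gets $f_i\geq d_i^{\,n_i}$ and, comparing with the coset action, $\prod_i\big(f_i\,[\bar G_i:\bar H_i]\big)\leq m$. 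By Lemma~\ref{lem:gur} each $T_i$ lies on an explicit list, and the numerical input I would extract is that $d_i$ is a power of $p$ with $d_i\geq 5$ and
\[ |\aut{T_i}|\cdot 24^{1/3}\leq d_i^{\,5},\qquad\text{with } |\aut{T_i}|\cdot 24^{1/3}\leq d_i^{\,4}\ \text{unless } T_i=\psl{3}{3}. \]
This last inequality is exactly where $\psl{3}{3}$, and hence the prime $13$, is singled out: for every other group on the list the order, even after adjoining all outer automorphisms, stays below $d_i^{\,4}/24^{1/3}$, while $|\aut{\psl{3}{3}}|=11232$ lies between $13^{4}/24^{1/3}$ and $13^{5}/24^{1/3}$.

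The step I expect to be the main obstacle is controlling the factor $|\bar G_i|$: a priori it is as large as $n_i!$, which is super-polynomial in $m$ and would wreck any polynomial bound. The resolution is that $\bar H_i$ is solvable, so Dixon's bound on solvable permutation groups gives $|\bar H_i|\leq 24^{(n_i-1)/3}<5^{\,n_i}\leq d_i^{\,n_i}$, which is absorbed by a single factor of the socle index. Concretely, setting $M_i=d_i^{\,n_i}[\bar G_i:\bar H_i]$ (so that $\prod_i M_i\leq m$), the displayed inequality yields, component by component,
\[ |\aut{T_i}|^{n_i}\,|\bar G_i|\leq |\aut{T_i}|^{n_i}\,[\bar G_i:\bar H_i]\,24^{(n_i-1)/3}\leq \big(d_i^{\,n_i}\big)^{a_i}[\bar G_i:\bar H_i]\leq M_i^{\,a_i}, \]
where $a_i=\log_{d_i}\!\big(|\aut{T_i}|\cdot 24^{1/3}\big)\leq 5$, and $a_i\leq 4$ when $T_i\neq\psl{3}{3}$. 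Multiplying over $i$ gives $|G|\leq\prod_i M_i^{\,a_i}\leq\big(\prod_i M_i\big)^{5}\leq m^{5}$, and the sharper exponent $4$ whenever no factor equals $\psl{3}{3}$, that is, whenever $p\neq 13$. What then remains is purely bookkeeping: verifying the numerical inequality case by case against Guralnick's list and checking the two multiplicative index comparisons, both of which are routine.
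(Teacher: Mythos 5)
Your overall architecture (reduce to $\rad(G)=1$, decompose the socle, bound the automorphism part of each simple factor by a power of the local index, control the permutation part via solvability) is sound, and your numerical checks against Guralnick's list are correct, including the isolation of $\psl{3}{3}$ as the one group forcing exponent $5$. But there is a genuine gap at the step you label ``comparing with the coset action'': the inequality $\prod_i f_i\,[\bar G_i:\bar H_i]\leq m$ is false. You split the permutation action over the distinct minimal normal subgroups $S_i$ and multiply the indices $[\bar G_i:\bar H_i]$ over $i$; but when $G$ induces \emph{linked} (e.g.\ diagonal) actions on the factor sets of several minimal normal subgroups, all these indices come from one and the same quotient of $G$, so their product overcounts what $|G:H|$ can pay for. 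Concretely, take $T=\alt{5}$, $p=5$, let $N=T^5\times T^5$, and let $G=N\rtimes C_5$, where a generator of $C_5$ cyclically shifts the five coordinates of \emph{both} copies of $T^5$ simultaneously; take $H=\alt{4}^{10}\leq N$. Then $H$ is solvable of index $m=5^{10}\cdot 5=5^{11}$, $\rad(G)=1$, and $\soc(G)=S_1\times S_2$ with $S_i\cong T^5$ minimal normal, $n_i=5$, $d_i=5$. Here $f_i=5^5$, while $\bar G_i\cong C_5$ and $\bar H_i=1$ (since $H\leq N$ acts trivially on the factors), so
\[
\prod_i f_i\,[\bar G_i:\bar H_i]=\bigl(5^5\cdot 5\bigr)^2=5^{12}>5^{11}=m .
\]
Hence $\prod_i M_i>m$ as well, and your final chain $|G|\leq\prod_i M_i^{a_i}\leq\bigl(\prod_i M_i\bigr)^5\leq m^5$ collapses at its last step. (The theorem of course still holds for this $G$; it is only your intermediate claim that fails.) What is true is only the global statement: if $\bar G\leq \sym{\Delta}$ is the image of $G$ on the set $\Delta$ of \emph{all} $k=\sum_i n_i$ simple factors and $\bar H$ the image of $H$, then $[\bar G:\bar H]\leq |G:H\soc(G)|=m/\prod_i f_i$; the blockwise indices $[\bar G_i:\bar H_i]$ cannot be multiplied against this quantity.

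The repair is precisely to work globally, and then your Dixon-based idea does go through: writing $B$ for the kernel of the $G$-action on $\Delta$, one has $|G|=|B|\cdot|\bar G|$ with $|B|\leq\prod_j|\aut{T_j}|$ (product over all $k$ simple factors, using $\cent{G}{\soc(G)}=1$), and $|\bar G|\leq|\bar H|\,[\bar G:\bar H]\leq 24^{(k-1)/3}\cdot p^{\alpha-\beta}$, where $p^{\beta_j}=|T_j:H_j|$ and $\beta=\sum_j\beta_j$; your inequality $|\aut{T_j}|\cdot 24^{1/3}\leq p^{4\beta_j}$ (resp.\ $p^{5\beta_j}$ for $\psl{3}{3}$) then yields $|G|\leq p^{4\beta}\cdot p^{\alpha-\beta}\leq m^4$ (resp.\ $m^5$). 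This corrected argument is essentially the paper's proof, with one difference: instead of Dixon's bound on the solvable group $\bar H$ plus the index $[\bar G:\bar H]$, the paper bounds all of $G/B$ at once by Mar\'oti's theorem (Theorem~\ref{thm:maroti}, applicable because Proposition~\ref{prop:compfact} excludes alternating composition factors of degree at least $6$), getting $|G/B|\leq 4^{k-1}$, and then absorbs $4^{k}$ into $\prod_j 4|\aut{T_j}|\leq p^{5\beta}\leq m^5$, which avoids the $|G:H\soc(G)|$ bookkeeping altogether. A final, minor, inaccuracy: your $d_i$ need not be a power of $p$; for $T_i=\psl{3}{2}\cong\psl{2}{7}$ with $p=2$ (item (3) of Lemma~\ref{lem:gur} with $q=7$) the least index of a solvable subgroup is $7$, realized by a copy of $\sy{4}$, not $8$. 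This is harmless for your numerics, since only $d_i\geq 5$ and the displayed inequality are actually used.
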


\section{Simple groups with solvable subgroups of prime-power index}\label{sec:simple}

If $p=2^k+1$ is a prime number, then $p$ is said to be a Fermat prime. Easy argument shows that if $2^k+1$ is a prime number, then $k=2^m$, and so a Fermat prime must be of the form $2^{2^m}+1$ for some $m\geq 0$. At this moment only five Fermat primes are known, namely $2^1+1=3$, $2^2+1=5$, $2^4+1=17$, $2^8+1=257$, and $2^{16}+1=65537$. The question whether there are more Fermat primes or whether the number of Fermat primes is finite or infinite is open. Catalan's conjecture proved by Mih\u{a}ilescu~\cite{mihai} states that two consecutive natural numbers cannot both be proper powers except for 8 and 9. Thus the following lemma is valid.

\begin{lemma}\label{lem:nt}
If $q$ is a natural number such that $q$ and $q+1$ are both prime-powers, then one of the following holds:
\begin{enumerate}
    \item $q+1$ is a power of $2$ and $q$ is a Mersenne prime;
    \item $q=2^3=8$ and $q+1=3^2=9$;
    \item $q=2^{2^m}$ with some $m\geq 0$ and $q+1$ is a Fermat prime. 
\end{enumerate}
\end{lemma}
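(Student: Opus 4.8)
The plan is to prove Lemma~\ref{lem:nt}, a number-theoretic statement: if $q$ and $q+1$ are both prime-powers, then one of the three listed cases holds. The key observation is that $q$ and $q+1$ are consecutive integers, hence coprime, so they cannot be powers of the same prime. Thus one of them is even (a power of $2$) and the other is odd (a power of an odd prime). I would split into two cases according to whether the even member is $q$ or $q+1$.

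First I would handle the case where $q+1$ is a power of $2$. Then $q$ is odd, so $q$ is a power of an odd prime, say $q = r^k$ with $r$ odd. If $k=1$ then $q$ is itself an odd prime and $q+1 = 2^s$, which means $q = 2^s - 1$ is a Mersenne prime — this is case~(1). If $k \geq 2$, then $q$ is a proper power; but $q+1 = 2^s$ is also a proper power when $s \geq 2$ (and $s=1$ forces $q=1$, not a prime-power in the relevant sense). So both $q$ and $q+1$ are proper powers of consecutive integers, and Catalan's conjecture (Mih\u{a}ilescu's theorem, \cite{mihai}) forces $q=8$, $q+1=9$, which is case~(2). One must be slightly careful to check that the Catalan hypothesis genuinely applies — that both numbers are \emph{proper} powers — which is where the exponent bookkeeping matters.

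Next I would treat the symmetric case where $q$ is a power of $2$, say $q = 2^n$, and $q+1$ is a power of an odd prime. Writing $q+1 = r^k$ with $r$ odd: if $k=1$, then $q+1$ is an odd prime of the form $2^n + 1$, i.e.\ a Fermat prime, and by the elementary argument recalled at the start of Section~\ref{sec:simple}, $n$ must itself be a power of $2$, giving $q = 2^{2^m}$ — this is case~(3). If $k \geq 2$, then $q+1$ is a proper power and $q = 2^n$ is a proper power provided $n \geq 2$; again Catalan's theorem forces $q+1 = 9$, $q = 8$, reducing to case~(2) (the boundary $n=1$ gives $q=2$, $q+1=3$, which is the Fermat-prime case~(3) with $m=0$).

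The main obstacle is not conceptual but bookkeeping: one must carefully dispose of the small/degenerate exponent values ($k=1$ versus $k\geq 2$, and $s=1$ or $n=1$) so that Mih\u{a}ilescu's theorem is invoked only when both numbers are honestly proper powers, and so that the remaining borderline instances land correctly in cases~(1) or~(3). Once the case analysis is organized around "which of $q,q+1$ is even" and "is the odd member prime or a higher power," the conclusion follows immediately from the cited theorem together with the standard fact that $2^k+1$ prime forces $k$ a power of $2$.
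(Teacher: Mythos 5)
Your proof is correct and takes essentially the same route the paper intends: the paper offers no written proof at all, deducing the lemma directly from Mih\u{a}ilescu's theorem together with the standard fact that $2^k+1$ prime forces $k$ to be a power of $2$, and these are exactly the two ingredients you organize into the even/odd case analysis. One cosmetic remark: in your first case with $k\geq 2$ the conclusion $q=8$, $q+1=9$ actually contradicts the case hypothesis that $q$ is odd (so that sub-case is vacuous rather than an instance of case~(2)), but since the derived conclusion is literally a disjunct of the lemma, the argument remains valid as written.
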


The following Lemma is a consequence of Guralnick's classification~\cite{gur} of finite simple groups with a subgroup of prime-power index. Recall the definition of the set $\pi_0$ in equation~\eqref{eq:pi}.

\begin{lemma}\label{lem:gur}
  Suppose that $T$ is a non-abelian simple group and $H< T$ is a
  solvable subgroup of prime-power index $p^\alpha$. Then $p \in \pi_0$ and 
  one of the following holds:
  \begin{enumerate}
  \item $T=\alt 5$, $H = \alt 4$ and $p^\alpha=5$;
  \item $T=\psl 28$, $H$ is the stabiliser of a line in $\F_8^2$, and
    $p^\alpha=9$;
  \item $T=\psl 2q$ with some odd prime $q\geq 5$, $H$ is the stabiliser of a line in $\F_q^2$, and $p^\alpha=q+1$ is a 2-power;
  \item $T=\psl 2{2^{2^m}}$ with some $m\geq 2$, $H$ is the stabiliser of a line in $\F_{2^{2^m}}^2$, $\alpha=1$, and $p$ is a Fermat prime;
  \item $T=\psl 32$, $H$ is the stabiliser of a line or
    a plane in $\F_2^3$, and $p^\alpha=7$;
    \item $T=\psl 33$, $H$ is the stabiliser of a line or
      a plane in $\F_3^3$, and $p^\alpha=13$.
  \end{enumerate}
\end{lemma}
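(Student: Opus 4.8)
The plan is to invoke Guralnick's classification directly. First I would recall that a subgroup $H < T$ of prime-power index $p^\alpha$ in a non-abelian simple group $T$ is precisely the content of Guralnick's theorem, which lists all such pairs $(T,H)$. The task here is to intersect that list with the additional hypothesis that $H$ is \emph{solvable}, and to read off in each surviving case the value of $p^\alpha$ together with the constraint it imposes on the prime $p$. So the proof is essentially a case-by-case inspection of Guralnick's list, discarding every entry in which the relevant maximal subgroup fails to be solvable.

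Guralnick's classification (in the form most convenient here) asserts that if $T$ is simple and $H < T$ has index a prime power, then $(T,H,p^\alpha)$ is one of: an alternating group $T=\alt n$ with $H=\alt{n-1}$ and $n = p^\alpha$; a projective special linear group $T=\psl nq$ with $H$ the stabiliser of a point or hyperplane of projective space and $p^\alpha = (q^n-1)/(q-1)$; or one of a short list of sporadic exceptions ($\psl 2{11}$, $M_{11}$, $M_{23}$, $\psp 6{2}$, $\pomegap 8{2}$, and a few others). The key step is to determine in each family exactly when the point (or hyperplane) stabiliser is solvable. For the linear groups $\psl nq$, the stabiliser of a line in $\F_q^n$ is a maximal parabolic subgroup of the form $[q^{n-1}]\colon\gl {n-1}q$ (modulo scalars); such a parabolic is solvable precisely when its Levi complement $\gl{n-1}q$ is solvable, which forces either $n-1 = 1$ (i.e.\ $n=2$, where the Levi is a torus $\gl 1q$, always solvable) or $n-1=2$ with $q$ small enough that $\psl 2q$ is solvable, namely $q\in\{2,3\}$. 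This is what isolates cases (5) and (6) with $n=3$, $q\in\{2,3\}$, and reduces the $\psl nq$ family to $\psl 2q$ otherwise.

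For the surviving $\psl 2q$ subfamily, the index is $p^\alpha = q+1$, so I would apply Lemma~\ref{lem:nt} to the pair $(q, q+1)$: here $q$ is a prime power (the field order) and $q+1 = p^\alpha$ is also a prime power, so the number-theoretic trichotomy of Lemma~\ref{lem:nt} applies verbatim. In alternative (1) of that lemma, $q+1$ is a $2$-power with $q$ a Mersenne prime, giving case (3) (the small Mersenne value $q=3$ yields $\psl 23\cong\alt 4$, not simple, and is excluded, while $q=7$ gives $\psl 27\cong\psl 32$, handled separately); alternative (2) gives $q=8$, $q+1=9=3^2$, which is case (2); alternative (3) gives $q=2^{2^m}$ with $q+1$ a Fermat prime, which is case (4). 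The sporadic base case $T=\alt 5$, $H=\alt 4$, $p^\alpha=5$ comes from the alternating family with $n=5$ (for $n\geq 6$ the stabiliser $\alt{n-1}$ is non-solvable). In every surviving case the listed value of $p^\alpha$ lies in $\{5,9,q+1\text{ (a 2-power)},\text{Fermat prime},7,13\}$, which is exactly the union defining $\pi_0$, establishing $p\in\pi_0$.

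The main obstacle, and the step requiring the most care, is verifying that the sporadic exceptional entries in Guralnick's list (the Mathieu groups, $\psp 6{2}$, $\pomegap 8{2}$, $\psl 2{11}$, etc.) all have \emph{non-solvable} point stabilisers and so contribute nothing beyond the stated cases; this demands checking the relevant maximal subgroups against the ATLAS rather than a uniform argument. A secondary subtlety is the coincidence bookkeeping among the small groups ($\psl 27\cong\psl 32$, $\psl 24\cong\alt 5$, $\psl 29\cong\alt 6$), where one must ensure each genuinely simple group is recorded under exactly one normal form so that cases (1)--(6) are disjoint and exhaustive. Once those finite checks are dispatched, the proof is complete.
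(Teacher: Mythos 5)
Your route is the same as the paper's: invoke Guralnick's classification, discard every entry whose subgroup is non-solvable (for $\psl{n}{q}$ exactly as you argue, since the parabolic has a section/Levi involving $\psl{n-1}{q}$, forcing $n-1=1$, or $n-1=2$ with $q\in\{2,3\}$), and then feed the pair $(q,q+1)$ into Lemma~\ref{lem:nt} to split the $\psl{2}{q}$ family into cases (2)--(4). That core reduction is correct and matches the paper step for step. However, your ``coincidence bookkeeping'' contains a genuine error that, if carried out, would make the classification false.

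You propose to record each simple group ``under exactly one normal form so that cases (1)--(6) are disjoint,'' and accordingly you set $q=7$ aside in case (3) as ``handled separately'' via $\psl{2}{7}\cong\psl{3}{2}$. But the lemma classifies triples $(T,H,p^\alpha)$, not isomorphism types of $T$, and cases (3) and (5) deliberately overlap in $T$: the group $\psl{2}{7}\cong\psl{3}{2}$ carries \emph{both} a solvable subgroup of index $8=2^3$ (the Borel $7{:}3$ of order $21$, i.e.\ the line stabiliser in $\F_7^2$, which is the $q=7$ instance of case (3)) \emph{and} solvable subgroups of index $7$ (the line and plane stabilisers in $\F_2^3$, of order $24$, which is case (5)). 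These are different subgroups with different prime-power indices ($p=2$ versus $p=7$), so neither occurrence can absorb the other; deleting $q=7$ from case (3) would omit the triple $(\psl{2}{7},\,7{:}3,\,2^3)$ outright, and downstream results would break --- the paper in fact relies on this double listing in the proof of Corollary~\ref{cor:cor2}(1). A second, harmless, inaccuracy: Guralnick's exceptional entries are $\psl{2}{11}$, $M_{11}$, $M_{23}$ and ${\sf PSU}_4(2)$; the groups $\psp{6}{2}$ and $\pomegap{8}{2}$ you list do not occur. Since every exceptional entry is discarded anyway (each such $H$ is non-solvable --- e.g.\ in the ${\sf PSU}_4(2)$ case $\alt{5}$ is a composition factor of $H$, which is how the paper dismisses it without any ATLAS search), this slip does not affect the outcome.
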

\begin{proof}
  Guralnick~\cite{gur} classified subgroups of prime-power index in finite
  non-abelian simple groups. Our lemma follows by inspection of Guralnick's
  list which contains five cases (a)--(e).
  In Guralnick's case~(a), $H=\alt{n-1}$ which is solvable only for
  $n-1=4$ (that is, $n=5$); this gives item~(1). In case~(b), $T=\psl nq$ and
  $H$ is the stabilizer of either a line or a hyperplane in $\F_q^n$.
  This stabiliser is a parabolic subgroup and it contains a section
  isomorphic to $\psl{n-1}q$. Hence $H$ can only be solvable if
  $\psl {n-1}q$ is solvable which happens only for $n-1=1$ or for $n-1=2$ and
  $q\in\{2,3\}$. If $n-1=1$, then we obtain items (2)--(4). In items (3)--(4), the fact that $q$ and $p$ are primes, respectively, follows
  from Lemma~\ref{lem:nt}. If $n-1=2$ and $q=2,3$, then we obtain items (5)--(6). In cases, (c)--(e) of Guralnick's list, $H$ is non-solvable (in case~(e),
  $\alt 5$ is a composition factor of $H$).
  \end{proof}
  
  \begin{corollary}\label{cor:gur}
  If $T$, $H$, $p$, and $\alpha$  are as in Lemma~\ref{lem:gur}, then the following hold.
  \begin{enumerate}
      \item $H$ is a maximal subgroup of $T$;
      \item in cases (1)--(5), $4\cdot |\aut(T)| \leq p^{4\cdot \alpha}$, while in case (6), $4\cdot|\aut(T)| \leq 13^{5}$;
      \item $H$ is a Hall $p'$-subgroup of $T$;
      \item in cases (1)--(4), $T$ contains a unique conjugacy class of Hall $p'$-subgroups, while in cases (5)--(6), $T$ contains two conjugacy classes of Hall $p'$-subgroups.
  \end{enumerate}
  \end{corollary}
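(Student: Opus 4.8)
The plan is to establish the four assertions in turn, running through the six configurations of Lemma~\ref{lem:gur} one at a time, since in every case $T$ is one of $\alt 5$, $\psl 28$, $\psl 2q$, $\psl 2{2^{2^m}}$, $\psl 32$, $\psl 33$, and $H$ is an explicit point stabiliser. For part~(1), $H$ is either $\alt 4<\alt 5$ or the stabiliser of a line (respectively a plane) in the natural module of $\psl nq$ with $n\in\{2,3\}$. In the first case $\alt 4$ is a maximal subgroup of $\alt 5$, and in the remaining cases $H$ is a Borel or maximal parabolic subgroup of $\psl nq$; the maximality of such stabilisers is part of the known subgroup structure of these groups (for $n=2$ this is Dickson's classification of subgroups of $\psl 2q$), so maximality follows by inspection.

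For part~(3), since $|T:H|=p^\alpha$ is already a $p$-power, it suffices to check that $p\nmid|H|$, equivalently that $p^\alpha$ is the full $p$-part of $|T|$. I would verify this from $|H|=|T|/p^\alpha$ in each case. For instance, in case~(3) one has $q+1=2^\alpha$, so $q=2^\alpha-1$ and $|H|=q(q-1)/2=q(2^{\alpha-1}-1)$ is odd; in case~(4) the prime $p=q+1$ exceeds $q$ and hence divides neither $q$ nor $q-1$, while $|H|=q(q-1)$; the remaining cases reduce to finite checks on $|\alt 5|=60$, $|\psl 28|=504$, $|\psl 32|=168$, and $|\psl 33|=5616$.

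For part~(2), write $|\aut T|=|T|\cdot|\out T|$, where $|\out{\psl 2q}|=\gcd(2,q-1)\cdot f$ and $|\out{\psl 3q}|=\gcd(3,q-1)\cdot f\cdot 2$ for $q=p_0^f$ (the extra factor $2$ being the graph automorphism). The inequalities in cases~(1), (2), (5) are then numerical checks of $4\cdot|\aut T|$ against $p^{4\alpha}$. Case~(3) reduces to $4q(q-1)\leqslant(q+1)^3$, which holds for all $q\geqslant 2$. Case~(4) is the delicate one, since $|\out T|=2^m$ contributes a field-automorphism factor: the inequality becomes $4\cdot 2^m\,q(q-1)\leqslant(q+1)^3$ with $q=2^{2^m}$, and I would control the offending factor by $2^m\leqslant q^{1/2}$ (valid as $m\leqslant 2^{m-1}$ for $m\geqslant 1$), so that $4\cdot 2^m q(q-1)<4q^{5/2}\leqslant q^3<(q+1)^3$ because $q\geqslant 16$. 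Finally, case~(6) is deliberately excluded from the $p^{4\alpha}$ bound: there $4\cdot|\aut T|=4\cdot 2\cdot 5616=44928$ already exceeds $13^4=28561$ but stays well below $13^5$, which is exactly why the statement records the weaker estimate $13^5$ in this single case.

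For part~(4), every Hall $p'$-subgroup of $T$ has index equal to the full $p$-part of $|T|$, namely $p^\alpha$, so by parts~(1) and~(3) counting the conjugacy classes of Hall $p'$-subgroups amounts to counting the classes of subgroups of index $p^\alpha$. In cases~(1)--(4) these form a single class: for $\alt 5$ they are the point stabilisers $\alt 4$, and for $\psl 2q$ Dickson's classification shows that the subgroups of index $q+1$ are precisely the Borel subgroups, which are mutually conjugate. In cases~(5)--(6) the line stabiliser and the plane stabiliser both have index $p^\alpha$ and the same order, are interchanged by the graph automorphism but by no inner automorphism, and hence lie in two distinct classes; an order argument shows any subgroup of that index is maximal, and the maximal-subgroup data for $\psl 32$ and $\psl 33$ confirms these parabolics are the only such subgroups, giving exactly two classes. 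I expect this last counting step to be the main obstacle, since it relies on the full subgroup (or maximal-subgroup) structure of the groups involved, which I would supply by citing Dickson's theorem for $\psl 2q$ together with the explicit subgroup data for $\psl 32$ and $\psl 33$.
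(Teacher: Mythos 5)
Your proposal is correct and takes essentially the same approach as the paper: a case-by-case verification over the six configurations of Lemma~\ref{lem:gur}, checking $p\nmid|H|$ for part~(3), the numerical estimates on $4\cdot|\aut(T)|$ (with $13^5$ needed only for $\psl 33$) for part~(2), and counting conjugacy classes via the line/plane stabiliser structure for part~(4). The only difference is cosmetic, namely which standard facts you cite: you use Dickson's theorem and explicit maximal-subgroup data where the paper invokes primitivity of the natural actions and, implicitly, Guralnick's classification to see that every subgroup of index $p^\alpha$ is one of the listed stabilisers.
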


\begin{proof}
(1) The maximality of $H$ follows by noting that $H$ is the point stabiliser of a primitive action of $T$.

(2) In cases, (1), (2), (5) and (6) of Lemma \ref{lem:gur}, the statement follows by 
direct calculation observing that $|\out {A_5}|=|\out{\psl 32}|=|\out{\psl 33}|=2$, while $|\out{\psl 28}|=3$. 
In case~(3), we have $|\out{\psl 2{q}}|=2$ and $q+1=2^\alpha$ for some $\alpha\geq 2$, and so 
\[
4\cdot |\aut(\psl 2q)| \leqslant 4\cdot \dfrac{q \cdot (q^2-1)}{2} \cdot 2 \leqslant 4\cdot (2^{\alpha})^3  \leqslant 2^{4 \cdot \alpha} =   |T:H|^4.
\]
In  case~(4), $p = 2^{2^m}+1$ with some $m\geq 2$, $T = \psl 2{p-1}$, 
and $|\out {\psl 2{p-1}}|=2^m$. Consequently, 
\[
4\cdot|\aut(\psl 2{p-1})|  =  4\cdot (p-1)\cdot p\cdot (p-2) \cdot 2^m\leqslant p^4 = |T:H|^4.
\]

(3) and (4). Let us show that $p\nmid |H|$. If $T=\alt 5$, $\psl 32$, or $\psl 33$, this follows by directly computing the order of $T$ and $H$. If $T=\psl 2q$, then $|T|=q\cdot (q+1)\cdot (q-1)/d$ and $H=q\cdot(q-1)/d$ where $d=\gcd(q+1,2)$. We claim that 
\[\gcd(|H|,|T:H|)=\gcd(q\cdot (q-1)/d,q+1)=1.
\]
If $q$ is even, this is clear, since a prime divisor $r$ of $q+1$ must be odd and so $r$ cannot divide $q$ or $q-1$. If $q$ is odd, then $q+1=p^\alpha$ must be a power of 2. Since $q\geq 3$, $4\mid q+1$, which gives that $4\nmid q-1$. Hence $|H|=q\cdot (q-1)/2$ is odd and this shows that $\gcd(|H|,|T:H|)$ must also be odd. On the other hand, an odd prime $r$ which divides $q+1$ does not divide $q\cdot (q-1)$, and hence $\gcd(|H|,|T:H|)=1$.
  
In particular, in each of these cases, $H$ is a maximal subgroup of $T$ and $H$ is Hall $p'$-subgroup of $T$. Furthermore, in cases (1)-(4), the possible subgroups $H$ are stabilizers of a line in $\F_q^2$ and since $T$ is transitive on the set of such lines, these stabilizers form a single conjugacy class. In cases (5)-(6), $H$ is the stabilizer of either a line or a plane in $\F_q^3$. As $T$ is transitive on the set of lines and also on the set of planes, the stabilizers of lines form a conjugacy class, and another distinct conjugacy class is formed by the stabilizers of the planes. 
\end{proof}

The reason why the constant ``$4$'' appears in Corollary \ref{cor:gur}(2) is that it
also appears in the particular case of  Mar\'oti's result \cite[Corollary 1.5]{maroti} 
stated as~Theorem \ref{thm:maroti} in Section~\ref{sec:bound}.

\begin{lemma}\label{lem:charsimp}
  Suppose that $k\geq 1$, $T_1,\ldots,T_k$ are  finite, non-abelian, simple groups, and set 
  \[
  M=T_1\times\cdots\times T_k.
  \] 
  Suppose that $K$ is a solvable subgroup of 
  $M$ with $p$-power index for some prime $p$. Then $K$ is core-free, 
  and there exists, for each $i\in\{1,\ldots,k\}$, a proper subgroup $H_i< T_i$ 
  such that $|T_i:H_i|=p^{\alpha_i}$, the triple $(T_i,H_i,p^{\alpha_i})$ is as in one of the
  items of Lemma~\ref{lem:gur}, and $K= H_1\times\cdots\times H_k$.
  Furthermore, the following hold.
  \begin{enumerate}
  \item If $p\neq 2$, then the $T_i$ are pairwise isomorphic.
  \item If $p\neq 2$ and $p\neq 3$, then $\alpha_i=1$ for all $i$ and $|M:K|=p^k$; while if $p=3$, then 
  $\alpha_i=2$ for all $i$ and $|M:K|=p^{2k}$.
      \item $K$ is a Hall $p'$-subgroup of $M$.
      \item If $p\neq 7,13$, then $M$ contains a unique conjugacy class of Hall $p'$-subgroups.
  \end{enumerate}
\end{lemma}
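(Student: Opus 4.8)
The plan is to establish the structural decomposition $K=H_1\times\cdots\times H_k$ together with core-freeness first, and then to read off the four numbered assertions from Lemma~\ref{lem:gur} and Corollary~\ref{cor:gur}. For core-freeness, I would use that every normal subgroup of a direct product of non-abelian simple groups is a sub-product $\prod_{i\in S}T_i$; since $\core MK$ is normal in $M$ and is contained in the solvable group $K$, whereas each $T_i$ is non-solvable, the only possibility is $\core MK=1$. For the decomposition, let $\pi_i\colon M\to T_i$ be the $i$-th projection and set $H_i=\pi_i(K)$. As $K$ is solvable so is its image $H_i$, and as $T_i$ is non-solvable $H_i<T_i$ is proper. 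From $K\leq H_1\times\cdots\times H_k\leq M$ the index $\prod_i|T_i:H_i|=|M:\prod_iH_i|$ divides $|M:K|$, which is a power of $p$; hence each $|T_i:H_i|=p^{\alpha_i}$ is a $p$-power. Thus $H_i$ is a proper solvable subgroup of $T_i$ of prime-power index, so Lemma~\ref{lem:gur} applies, giving $p\in\pi_0$ and identifying each triple $(T_i,H_i,p^{\alpha_i})$ with one of its items; by Corollary~\ref{cor:gur}(3) each $H_i$ is a Hall $p'$-subgroup of $T_i$, so $p\nmid|H_i|$.

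From this the decomposition and assertion~(3) follow by a Hall $p'$-order count. Since $p\nmid|H_i|$ for every $i$, the product $H_1\times\cdots\times H_k$ has order coprime to $p$ equal to the $p'$-part of $|M|$, so it is a Hall $p'$-subgroup of $M$. But $K\leq H_1\times\cdots\times H_k$ and $|M:K|$ is a $p$-power, so $|K|$ is also coprime to $p$ and equals the $p'$-part of $|M|$; comparing orders yields $K=H_1\times\cdots\times H_k$, and $K$ is a Hall $p'$-subgroup of $M$, which is~(3).

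The remaining items come from inspecting Lemma~\ref{lem:gur} prime by prime. For each odd $p$ the list determines the isomorphism type of $T$ uniquely --- $p=3$ forces $\psl 28$, $p=5$ forces $\alt5$, $p=7$ forces $\psl 32$, $p=13$ forces $\psl 33$, and a Fermat prime $p\geq 17$ forces $\psl 2{p-1}$ --- whereas $p=2$ occurs only in case~(3), where distinct primes $q$ give non-isomorphic $\psl 2q$; this gives~(1). Reading off the exponent in the relevant cases gives $\alpha_i=1$ when $p\notin\{2,3\}$ (cases (1),(4),(5),(6)) and $\alpha_i=2$ when $p=3$ (case (2)), so $|M:K|=\prod_ip^{\alpha_i}$ equals $p^k$ or $p^{2k}$ accordingly, which is~(2). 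For~(4) I would first note that any Hall $p'$-subgroup $U$ of $M$ satisfies $U=\pi_1(U)\times\cdots\times\pi_k(U)$ with each $\pi_i(U)$ a Hall $p'$-subgroup of $T_i$: indeed $U\leq\prod_i\pi_i(U)$ while $\prod_i\pi_i(U)$ has order at most the $p'$-part of $|M|$, forcing equality. Two such products are conjugate in $M$ exactly when their components are conjugate in the respective $T_i$, so the number of conjugacy classes of Hall $p'$-subgroups of $M$ is the product of the numbers for the factors; when $p\neq 7,13$ each $T_i$ lies in cases (1)--(4), where Corollary~\ref{cor:gur}(4) gives a single class, so $M$ has a single class, which is~(4).

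The step I expect to need the most care is the decomposition $K=H_1\times\cdots\times H_k$. The argument above is clean precisely because it rests on Corollary~\ref{cor:gur}(3), namely that each projection $H_i$ is already a Hall $p'$-subgroup of $T_i$ so that $p\nmid|H_i|$; without this coprimality one would have to argue through the normal-subgroup (Goursat) structure of subdirect products of the $T_i$, which is considerably more involved.
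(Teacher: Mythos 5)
Your proof is correct and follows essentially the same route as the paper's: the same coordinate projections $H_i=\pi_i(K)$, the same core-freeness argument (normal subgroups of $M$ are subproducts of the $T_i$), the same coprimality count resting on Corollary~\ref{cor:gur}(3) to force $K=H_1\times\cdots\times H_k$, case inspection of Lemma~\ref{lem:gur} for items (1)--(2), and componentwise conjugacy via Corollary~\ref{cor:gur}(4) for item (4). If anything, your handling of item (4) is slightly more explicit than the paper's, which only asserts that ``the same argument'' applies to an arbitrary Hall $p'$-subgroup, whereas you supply the order-counting step showing such a subgroup is the product of its (Hall $p'$) projections without assuming its solvability.
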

\begin{proof}
  Let $\sigma_i:M\rightarrow T_i$ be the $i$-th coordinate projection
  that maps $(t_1,\ldots,t_k)\mapsto t_i$.  For $i\in\{1,\ldots,k\}$, set $H_i=K\sigma_i$. Since $K$ is solvable, $H_i$ is solvable, and in
  particular, $H_i<T$. Since $\core MK$ is a normal subgroup of $M$ and is contained in  $K$, and as the normal subgroups of $M$ are subproducts of some of the $T_i$, we obtain that
  $\core MK=1$ and $K$ is core-free.
 Set $\overline K=\prod_i H_i$.
  Clearly, $K\leq \overline K$. As $|M:K|=p^m$ for some $m$, we have that $|M:\overline K|$
  is a power of $p$, and hence, for all $i$, we have $|T_i:H_i|=p^{\alpha_i}$ for some $\alpha_i$. Thus the triple $(T_i, H_i,p^{\alpha_i})$ is as in one of the items of Lemma~\ref{lem:gur}.
  Since $p\nmid |H_i|$, $p\nmid |\overline K|$, and so $K=\overline K$; that is,
  $K=H_1\times\cdots\times H_k$. 
  Now items~(1)--(2) follow by inspection of the cases in Lemma~\ref{lem:gur}. 
  Let us verify claim~(3). If $L$ is another Hall $p'$-subgroup of $M$, then the same argument shows that $L\cong L_1\times\cdots\times L_k$ where the subgroup $L_i$ of $T_i$ satisfies the same conditions 
  as $H_i$.
  Assuming that $p\neq 7,13$, $T_i$ has a unique conjugacy class of subgroups isomorphic to $H_i$ (Corollary~\ref{cor:gur}), and hence
  $L$ and $K$ are conjugate in $M$.
\end{proof}

\section{The structure of groups with a solvable subgroup of prime-power index}\label{sec:struct}

The next proposition characterizes the possible non-abelian composition factors in groups with a solvable subgroup
of prime-power index.
\begin{proposition}\label{prop:compfact}
Let $G$ be a possibly infinite group with a solvable subgroup $H$ such that $|G:H|=p^\alpha$ with some prime $p$. Suppose that $G/\core GH$ has a non-cyclic composition factor $T$. Then $p \in \pi_0$ and one of the following is valid:
  \begin{enumerate}
  \item $p=2$ and $T\cong \psl 2q$ with some odd prime $q$ such that $q\geq 5$ and $q+1$ is a power of $2$;
  \item $p=3$ and $T \cong \psl 28$;
  \item $p=5$ and $T\cong \alt 5$;
  \item $p=7$ and $T\cong \psl 32$;
  \item $p=13$ and $T\cong \psl 33$;
  \item $p$ is a Fermat prime and $T\cong \psl 2{2^{2^m}}$ with some $m\geq 2$ such that $p=2^{2^m}+1$. 
  \end{enumerate}
\end{proposition}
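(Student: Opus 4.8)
The plan is to reduce the statement about an arbitrary group $G$ with solvable subgroup $H$ of prime-power index to the already-established classification of characteristically simple groups in Lemma~\ref{lem:charsimp}. Since $T$ is a non-cyclic (hence non-abelian simple) composition factor of $\overline G\defeq G/\core GH$, I would first pass to this finite quotient: because $\core GH$ is solvable and contained in $H$, the image $\overline H$ of $H$ in $\overline G$ is a solvable subgroup of the \emph{finite} group $\overline G$ with $|\overline G:\overline H|$ still a divisor of $p^\alpha$, hence a $p$-power. So without loss of generality I may assume $G$ is finite and $\core GH=1$, so that $H$ is core-free and solvable of $p$-power index, and $T$ is a non-abelian composition factor of $G$.

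The key step is to locate $T$ inside the socle of $G$. I would set $N=\soc G$ (or at least take a minimal normal subgroup containing the relevant chief factor); since $T$ is a non-abelian composition factor, $G$ has a non-abelian chief factor, and because $\core GH=1$ the subgroup $H$ intersects the relevant minimal normal subgroup $M$ in a proper subgroup. The standard observation is that $H\cap M$ is a solvable subgroup of $M$, and $|M:H\cap M|$ divides $|G:H|=p^\alpha$ and so is itself a $p$-power. Now $M$ is a direct product of isomorphic non-abelian simple groups $T_1\times\cdots\times T_k$ (each isomorphic to $T$, since $T$ is realized in $M$), and $K\defeq H\cap M$ is a solvable subgroup of $M$ of $p$-power index. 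This is precisely the hypothesis of Lemma~\ref{lem:charsimp}, which then forces each factor $T_i\cong T$ to appear in the list of Lemma~\ref{lem:gur} with the matching value of $p$. The enumeration of items (1)--(6) in the proposition is then just a transcription of the six cases of Lemma~\ref{lem:gur}, read off according to the prime $p$: $p=5$ gives $\alt 5$, $p=3$ gives $\psl 28$, $p=7$ gives $\psl 32$, $p=13$ gives $\psl 33$, the $2$-power case $q+1=p^\alpha$ with $p=2$ gives item~(1), and the Fermat-prime case gives item~(6). In particular $p\in\pi_0$.

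The main obstacle is the reduction that guarantees $T$ actually appears as a composition factor of a \emph{minimal normal subgroup} of the finite quotient, rather than merely somewhere in a composition series. The clean way to handle this is to argue that among the chief factors of the finite group $\overline G$ with trivial core-condition, any non-abelian chief factor $M/1$ must in fact be a minimal normal subgroup $M$ (or, more carefully, to replace $\overline G$ by $\overline G/R$ where $R$ is the solvable radical, noting that $\soc(\overline G/R)$ is a direct product of the non-abelian chief factors and that $H R/R$ is a solvable subgroup whose index is still a $p$-power). Concretely, I would let $R=\rad(\overline G)$, pass to $\widetilde G=\overline G/R$ and $\widetilde H=\overline H R/R$, so that $\widetilde H$ is solvable of $p$-power index and $\soc(\widetilde G)=M_1\times\cdots\times M_r$ is a product of non-abelian minimal normal subgroups. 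Any non-cyclic composition factor $T$ of $G/\core GH$ is a composition factor of some $M_j$, and applying the socle intersection argument to $M_j$ puts $T$ into the list as above. The only genuinely delicate point, which I would treat with care, is verifying that $|M_j:\,\widetilde H\cap M_j|$ is a $p$-power; this follows because $\widetilde H\cap M_j$ is the preimage structure of a point stabilizer in the transitive action of $\widetilde G$ on cosets, and the index of $\widetilde H\cap M_j$ in $M_j$ divides $|\widetilde G:\widetilde H|=p^\alpha$ by the orbit-counting/double-coset comparison $|M_j:\widetilde H\cap M_j|=|\widetilde H M_j:\widetilde H|$, which divides $|\widetilde G:\widetilde H|$.
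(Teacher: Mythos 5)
There is a genuine gap in your reduction. You only classify the composition factors of the socle of $\widetilde G=\overline G/\rad(\overline G)$, but you never address composition factors of $\widetilde G$ lying \emph{above} the socle, and your supporting claim that ``$\soc(\overline G/R)$ is a direct product of the non-abelian chief factors'' is false: a finite group with trivial solvable radical can have non-abelian chief (hence composition) factors that do not occur in its socle, because the quotient acting on the simple factors of the socle may itself be non-solvable. A concrete example satisfying all hypotheses of the proposition: take $G=\alt 5\wr \alt 5=\alt 5^5\rtimes \alt 5$ and $H=\alt 4^5\rtimes \alt 4$, where the top $\alt 4$ is a point stabiliser in the permuting copy of $\alt 5$. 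Then $H$ is solvable, $|G:H|=5^6$, $\core GH=1$, $\rad(G)=1$, and $\soc(G)=\alt 5^5$, yet $G/\soc(G)\cong \alt 5$ is a non-abelian composition factor that your socle-intersection argument never sees. (Here that factor happens to be isomorphic to the socle factors, but your argument does not show this; with $p=2$ one can build $G=\psl 2{31}\wr \psl 27$, with $\psl 27$ acting on the $8$ points of the projective line, where the factor $\psl 27$ above the socle is not even isomorphic to the socle factors.) Controlling these top factors is not a technicality: Theorem~\ref{thm:rad(G)} uses Proposition~\ref{prop:compfact} precisely to guarantee that \emph{no} composition factor of $G$, including those coming from the permutation action on the simple factors, is isomorphic to $\alt n$ with $n\geq 6$, so that Theorem~\ref{thm:maroti} applies. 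Incidentally, the point you single out as delicate --- that $|M_j:\widetilde H\cap M_j|$ is a $p$-power --- is the easy part, and your double-coset argument for it is exactly the paper's.

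What is missing is a mechanism to recurse above a minimal normal subgroup, and this is how the paper's proof differs from yours: it proceeds by induction on $|G|$ (after the same reduction to $\core GH=1$, which forces $G$ finite). For a minimal normal subgroup $N$, the composition factors of $N$ are handled by Lemma~\ref{lem:charsimp} just as you do; then, since $|G/N:HN/N|=|G:HN|=p^\beta$ with $\beta\leq\alpha$, either $\beta=0$, in which case $G=HN$ and $G/N\cong H/(H\cap N)$ is solvable, or $\beta\geq 1$ and the induction hypothesis applies to the pair $(G/N,\,HN/N)$. Since every composition factor of $G$ is a composition factor of $N$ or of $G/N$, this closes the argument. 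Your proof becomes correct if you replace the single pass to $\overline G/\rad(\overline G)$ by this induction (or an equivalent iteration up a chief series of $\overline G$).
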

\begin{proof}
  Suppose without loss of generality that $\core GH=1$. Then $G$ is a finite group, as it can be embedded into $\sy {p^\alpha}$. We proceed by induction on $|G|$. In the  base case of the induction $G$ is  simple and the assertion follows from Lemma~\ref{lem:gur}.  The induction hypothesis is that the assertion of the proposition is valid for groups of order less than $|G|$. Suppose that $G$ is not simple and let $N$ be a minimal normal subgroup of $G$. First we claim that the non-cyclic composition factors of $N$ are as 
  in the relevant item of the proposition.
 If $N$ is abelian, then the claim is trivially true, so suppose that $N$ is non-abelian.  
 As $|N:N\cap H|=|HN:H|$, the index of $N\cap H$ in
  $N$ is a power of $p$. Since $H$ is solvable, $H\neq N$, and in particular $N\cap H$ is a 
  proper subgroup of $N$. By Lemma~\ref{lem:charsimp}, the composition factors of $N$ must be as in the corresponding item of the proposition.
  
  It remains to show that the non-cyclic composition factors of $G/N$ are as in the proposition.
    Since $|G/N:HN/N|=|G:HN|$, we  find that $HN/N$ is a subgroup of $G/N$ with index $p^\beta$ for some $\beta\leq\alpha$. If $\beta=0$, then $HN=G$ and $G/N=HN/N\cong H/(N\cap H)$ which   is solvable. If $\beta\geq 1$, then $G/N$ satisfies the conditions of the induction hypothesis with the subgroup $HN/N$, and so a non-cyclic composition factor of $G/N$ must be as in the corresponding item of the proposition. 
\end{proof}

We are now in a position to prove Theorem \ref{thm:solvable}. 

\begin{proof}[Proof of Theorem \ref{thm:solvable}]
First note that $H$ is a finite solvable group and so it contains a solvable Hall $p'$-subgroup and this subgroup is also a solvable Hall $p'$-subgroup of $G$. If $p$ is a prime not contained in $\pi_0$ (defined in~\eqref{eq:pi}), 
then Proposition~\ref{prop:compfact} implies that all composition factors of $G$ must be cyclic, and so 
$G$ is solvable. This shows part~(1).

(2) 
Let us now suppose that $p\neq 7,13$ and prove that $G$ contains a unique
conjugacy class of Hall $p'$-subgroups. Note that this assertion holds when $G$ is solvable (by Hall's Theorem \cite[9.1.7]{robinson}), or when $G$ is characteristically simple (Lemma~\ref{lem:charsimp}). Our argument goes by induction on $|G|$, the base
case being the case of solvable or characteristically simple groups.
 
Assume that $G$ is not solvable and is not characteristically simple. Let $K_1$ and $K_2$ be two Hall  $p'$-subgroups of $G$ and suppose 
that $K_1$ is solvable. Let $M$ be a minimal normal subgroup of $G$. First, assume that $M$ is an elementary abelian $p$-group.  Note that $K_1M/M$ and $K_2M/M$ are Hall $p'$-subgroups of $G/M$. Thus, by the induction hypothesis, 
$K_1M/M$ and $K_2M/M$ are conjugate in $G/M$; that is $(K_1M/M)^{gM}=K_2M/M$, for some $g\in G$, which implies that $K_1^gM=K_2M$. As $K_1$ and $K_2$ are $p'$-subgroups and $M$ is a $p$-group, $K_1^g$ and $K_2$ are Hall $p'$-subgroups of $Y=K_1^gM=K_2M$. Furthermore, $Y$ is a solvable group, and hence $K_1^g$ and $K_2$ are conjugate in $Y$ by Hall's Theorem (\cite[9.1.7]{robinson}). Therefore $K_1$ and $K_2$
are conjugate in $G$.

Next we suppose that $M$ is an elementary abelian $r$-group with $r\neq p$. Then, 
for $i=1,2$, the product $MK_i$ is a $p'$-subgroup of $G$, but, since the $K_i$ are Hall $p'$-subgroups, we must have $MK_i=K_i$, and so $M\leq K_i$. Furthermore, $K_1/M$ 
and $K_2/M$ are Hall $p'$-subgroups of $G/M$, and hence by the induction hypothesis, $(K_1/M)^{gM}=K_2/M$ for some $g\in G$. That is, $K_1^g=K_2$, and so 
$K_1$ and $K_2$ are conjugate in $G$, as required.

Finally, assume that $M$ is a non-abelian characteristically simple group.  Then
$M\cong T^k$ where $T$ is a finite simple group. Furthermore, $T$ is a non-cyclic
composition factor of $G$, and hence the pair $(p,T)$ is as in one of the items (1)--(3), or~(6) of Proposition~\ref{prop:compfact}. Note that $K_1\cap M$ and $K_2\cap M$ are proper solvable subgroups of $M$ with $p$-power index. By Lemma~\ref{lem:charsimp}, both
$K_1\cap M$ and $K_2\cap M$ are Hall $p'$-subgroups of $M$ and, since $p\neq 7,13$, they are conjugate in $M$.
Hence there exists $n\in N$ such that $(K_1\cap M)^n=K_2\cap M$. Swapping $K_1$ by $K_1^n$, we may assume without loss of generality that $K_1\cap M=K_2\cap M$ and let us call this group $Y$. If $K_1=K_2$, then there is nothing more to prove, and so suppose that $K_1\neq K_2$. Set $N=N_G(Y)$. Since $1<Y<M$, the subgroup $N$ is proper in $G$, such that $K_1,K_2\leq N$. Furthermore, $K_1$ and $K_2$ are Hall $p'$-subgroups of $N$ with $K_1$ being solvable. By the induction hypothesis $K_1$ and $K_2$ are conjugate in $N$, and in particular, they are conjugate in $G$. \end{proof}

\begin{remark}
The examples in Lemma~\ref{lem:gur} show that the condition imposed on the prime $p$ in both 
assertions of Theorem~\ref{thm:solvable} is necessary.
Moreover, Theorem \ref{thm:solvable}(1) improves  the solvability criterion given by Carocca and Matos
\cite[Theorem A]{cm}
for all $p \not\in \pi_0$, because we only ask for the solvability of $H$ rather than its $2$-nilpotency.
\end{remark}

Let us now turn to the proof of Corollaries~\ref{cor:cor1}--\ref{cor:cor2}.

\begin{proof}[The proof of Corollary~\ref{cor:cor1}]
Since $H$ is a solvable subgroup of $G$ with prime-power index, $G/\core GH$ is a finite group with a solvable subgroup $H/\core GH$. Furthermore, $|G/\core GH:H/\core GH| = |G:H| =p^{\alpha}$. By Proposition~\ref{prop:compfact}, the quotient  $G/\core GH$ is solvable. However, since $\core GH$ is contained in $H$, it is solvable, and so the whole group $G$ is solvable. 
\end{proof}

\begin{proof}[The proof of Corollary~\ref{cor:cor2}] 
(1) If $G$ contains a solvable Hall $p'$-subgroup and a solvable Hall $q'$-subgroup for two distinct 
primes $p$ and $q$, then a 
possible non-cyclic composition factor of $G$ would appear in two distinct lines of Proposition~\ref{prop:compfact}.
The only isomorphism type that appears in two distinct lines of Proposition~\ref{prop:compfact} is  $T = \psl 27 \cong \psl 32$, but these two lines correspond to $p=2$ and $p=7$, respectively, and 
this case was excluded. Thus such a group $G$ must be solvable.

(2) Let $H$ and $K$ be a solvable Hall $p'$-subgroup and a Hall $3'$-subgroup of $G$, respectively. 
If $p\not\in \pi_0$, then Theorem \ref{thm:solvable}(1) implies that $G$ is solvable. 
So we may suppose that $p\in\pi_0$. Since $H \cap K$ is a solvable Hall $\{3,p\}'$-subgroup of $G$, 
$H \cap K$ is a solvable Hall $p'$-subgroup of $K$. Furthermore, $3$ does not divide $|K|$. If $K$ were non-solvable, then  a non-cyclic composition factor $T$ of $K$ would be isomorphic to the group in one of the lines of Proposition~\ref{prop:compfact}. Now, inspection of the orders of the possible groups $T$ shows in all cases  
that $3\mid |T|$, which is a contradiction. Consequently, $K$ is solvable. Finally, the solvability of $G$ follows from item (1).

(3) By assumption, $G = HP$, where $H$ is a normal Hall $p'$-subgroup of $G$ and $P$ is a Sylow $p$-subgroup of $G$. Since $H$ is a normal complement of $P$, it is sufficient to show that $H$ is solvable. 
Let $K$ be a solvable Hall $\{p,q\}'$-subgroup of $G$. Since $H$ is a normal $p'$-subgroup, $HK$ is a $p'$-subgroup of $G$. Further, since $H$ is a Hall $p'$-subgroup, $K \leqslant H$, and so $K$ is a solvable Hall $q'$-subgroup of $H$. As $q \not\in \pi_0$, Theorem \ref{thm:solvable}(1) gives that $H$ is solvable, and so $G=HP$ must also be solvable. 
\end{proof}

\begin{remark}
\begin{enumerate}
    \item Corollary~\ref{cor:cor2}(1) is no longer valid if the assumption of the solvability of  both Hall $p'$-subgroups is dropped. For instance, if $q$ is a prime, $q \geqslant 7$, then the group $G=\alt 5 \times C_q$ contains a solvable Hall $5'$-subgroup $H_1 = \alt 4 \times C_q$ and a non-solvable Hall $q'$-subgroup $H_2 = \alt 5$, and  $G$ is non-solvable.  
    \item When $p=2$, Corollary~\ref{cor:cor2}(2) coincides with solvability criterion given by Arad and Ward \cite[Corollary 4.4]{aw}.
    \item In Corollary~\ref{cor:cor2}(3), the normality of the $p$-complement is essential. For instance, consider $G=\psl{2}{11}$, $p=5$ and $q=11\not\in\pi_0$. The group $G$ is non-solvable  with an $11$-complement and a solvable Hall $\{5,11\}'$-subgroup.
\end{enumerate}
\end{remark}

\section{The bound on the order of $G/\rad(G)$}\label{sec:bound}

A group $G$ such that $G/\rad(G)$ is finite has a characteristic series (often referred to as the {\em radical series} and is used in computational group theory; see~\cite[10.1.1]{heo} or \cite[Section 3.3]{bhlgo}) of subgroups 
\begin{equation}\label{eq:radser}
1 \leqslant \rad(G) \leqslant A \leqslant B \leqslant G
\end{equation}
where the terms of the series are defined as follows. 
\begin{enumerate}
    \item $\rad(G)$ is the solvable radical of $G$. 
    \item The subgroup $A$ is defined to be the complete inverse image in $G$ of the socle (that is, 
    the product of the  minimal normal subgroups) $\socle(G/\rad(G))$ of $G/\rad(G)$. 
    \item Thus $A/\rad(G)$ is the direct product of a uniquely defined set $\Delta$ of non-abelian simple groups. This set is permuted by $G$, by conjugation, and $B$ is the kernel of the $G$-action on $\Delta$.
    \item Since the centraliser of $A/\rad(G)$ in $G/\rad(G)$ is trivial, the factor $B/A$ is isomorphic to a subgroup of the direct product of the outer automorphism groups of the simple factors of $A/\rad(G)$, and is therefore solvable. The factor $G/B$ can be regarded as a permutation group on the (generally small) set $\Delta$.
\end{enumerate}

The following theorem follows from taking $d=5$ in~\cite[Corollary~1.5]{maroti}.

\begin{theorem}\label{thm:maroti}
If $X$ is a subgroup of $S_k$ such that $X$ has no composition factor isomorphic to $A_n$ with 
$n\geq 6$, then $|X|\leq 4^{k-1}$.
\end{theorem}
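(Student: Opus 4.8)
The plan is to argue by induction on the degree $k$, reducing an arbitrary $X\leq S_k$ first to its transitive constituents, then to its primitive components, and to invoke Maróti's estimates on the orders of primitive groups only at the very end. The arithmetic point worth flagging in advance is that the constant $4$ is exactly the base for which this induction is self-supporting, which is why it (rather than some smaller number) appears.

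First I would dispose of the \emph{intransitive} case. If $X$ has orbits of lengths $k_1,\dots,k_r$ with $\sum_i k_i=k$, then $X$ embeds into the direct product $X_1\times\cdots\times X_r$ of its orbit constituents $X_i\leq S_{k_i}$. Each $X_i$ inherits the hypothesis, since a composition factor of $X_i$ is a composition factor of $X$; so by induction $|X_i|\leq 4^{k_i-1}$, whence $|X|\leq\prod_i 4^{k_i-1}=4^{k-r}\leq 4^{k-1}$. Next the \emph{imprimitive} case: suppose $X$ is transitive with a nontrivial block system of $b$ blocks of size $c$, so $k=bc$ with $b,c\geq 2$. By the Krasner--Kaloujnine embedding, $X$ embeds into the wreath product $X_0\wr X_1$, where $X_1\leq S_b$ is the group induced on the blocks and $X_0\leq S_c$ is the group induced on a block by its setwise stabiliser. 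Both $X_0$ and $X_1$ again satisfy the composition-factor hypothesis and have degree strictly smaller than $k$, so induction yields $|X_0|\leq 4^{c-1}$ and $|X_1|\leq 4^{b-1}$, and therefore
\[
|X|\leq |X_0|^{b}\,|X_1|\leq 4^{(c-1)b}\cdot 4^{b-1}=4^{bc-1}=4^{k-1}.
\]
It is precisely this computation that forces the base $4$: any smaller constant would fail to close the induction.

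The remaining and substantial case is the \emph{primitive} base case, and this is where the classification of finite simple groups genuinely enters. I would reduce everything to the assertion that a primitive group $P\leq S_d$ with no composition factor $A_n$ ($n\geq 6$) satisfies $|P|\leq 4^{d-1}$. Here the hypothesis does exactly the work one needs: it excludes $A_d$ and $S_d$ for $d\geq 6$, as well as the product-action and diagonal-type primitive groups whose socles involve large alternating groups. These are the only primitive groups whose orders can exceed $4^{d-1}$ (for instance $S_7$, with $|S_7|=5040>4^{6}$, is ruled out by its composition factor $A_7$). For the surviving groups one applies Maróti's generic estimate $|P|<d^{\,1+\lfloor\log_2 d\rfloor}$ together with the elementary inequality $d^{\,1+\lfloor\log_2 d\rfloor}\leq 4^{d-1}$, valid for all $d\geq 2$, and disposes of the affine groups $P\leq \mathrm{AGL}(e,p)$ directly from their order and of the short explicit list of exceptional primitive groups (Mathieu groups and the like) by inspection.

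The main obstacle, were one to carry this out from scratch rather than quote a reference, is exactly this primitive base case: it is CFSG-dependent, and the composition-factor hypothesis is tailored to remove precisely those primitive groups (the large alternating and symmetric groups, and the product-action groups built from them) whose orders would otherwise break the bound. Since all of this is packaged in \cite[Corollary~1.5]{maroti}, in the final write-up I would simply specialise that corollary to the parameter value $d=5$ — which translates the clause ``no alternating composition factor of degree exceeding $d$'' into ``no composition factor $A_n$ with $n\geq 6$'' and yields the constant $4$ — rather than reproduce the primitive analysis.
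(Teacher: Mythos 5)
Your proposal is correct and, at bottom, identical to the paper's: the paper offers no independent argument, deducing the theorem by specialising Mar\'oti's Corollary~1.5 to $d=5$, which is exactly your closing move, and your intransitive/imprimitive/primitive induction is in substance Mar\'oti's own derivation of that corollary. One claim you flag is, however, wrong: the imprimitive step does \emph{not} force the base $4$, since for any base $\gamma$ one has $\gamma^{(c-1)b}\cdot\gamma^{b-1}=\gamma^{bc-1}$ (and likewise the intransitive step closes for any base); what forces the constant is the primitive case --- already $S_5$ in its natural degree-$5$ action requires $\gamma^{4}\geq 120$, i.e.\ $\gamma\geq 120^{1/4}\approx 3.31$ --- and indeed Mar\'oti's corollary gives the sharper bound $d!^{(k-1)/(d-1)}=120^{(k-1)/4}$, the integer $4$ appearing in the paper merely as the smallest integer with $4^{4}\geq 5!$.
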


Let us now prove Theorem~\ref{thm:rad(G)}. 

\begin{proof}[Proof of Theorem \ref{thm:rad(G)}]
We may assume without loss of generality that $G$ is non-solvable. Then, Corollary~\ref{cor:cor1} implies that $p \in \pi_0$. 

Since $H$ is a solvable subgroup of prime-power index, $G/\core GH$ is a finite group in which the subgroup $H/\core GH$ is solvable of index $|G/\core GH:H/\core GH| =p^{\alpha}$. Set $m = p^{\alpha}$. In particular, $\core GH\leqslant \rad(G)$ and $G/\rad(G)$ is a finite group with a solvable subgroup of index at most $m = p^{\alpha}$. Now, passing to the quotient $G/\rad(G)$, we can assume without loss of generality that $\rad(G)=1$ and it is sufficient to study the order $|G|$.    

Consider the radical series of $G$ defined in~\eqref{eq:radser}. Since $\rad(G)=1$, it follows that $A$ is a product of the  minimal normal subgroups of $G$ and these are all non-abelian. Set $\Delta = \{T_1,\ldots,T_k\}$ to be the 
simple factors of $A$.  Then $A=T_1\times\cdots\times T_k$ and $H\cap A$ is a solvable subgroup of 
$A$ of index $p^\beta$ with $1\leq \beta\leq \alpha$. By Lemma~\ref{lem:charsimp}, 
each $T_i$ contains a proper solvable subgroup $H_i$ of index $p^{\beta_i}$ such that $A\cap H=H_1\times\cdots\times H_k$ and $|A:A\cap H|=p^{\beta_1+\cdots+\beta_k}=p^\beta\leq p^\alpha$. 
Since $\beta_i\geq 1$ for all $i$, it also follows that $k\leq \beta\leq \alpha$. Moreover,
Corollary~\ref{cor:gur} implies 
the inequality $4 \cdot |\aut(T_i)|\leq p^{5\beta_i}$, 
and in fact $4 \cdot |\aut(T_i)|\leq p^{4\beta_i}$ when $p\neq 13$.
Furthermore, the quotient $B/A$ is isomorphic to a subgorup of  $ {\sf Out}(T_1) \times \cdots \times 
{\sf Out}(T_k)$ and $X=G/B$ is a subgroup of $\sym(\Delta)$. 

By Proposition~\ref{prop:compfact}, $G$ has no composition factor isomorphic to 
$A_n$ with $n\geq 6$, and so $|X|\leq 4^{k-1}$ (Theorem~\ref{thm:maroti}).

Bounding the individual quotients
of the radical series~\eqref{eq:radser} and using the estimate in Corollary~\ref{cor:gur}(2), we obtain that 
\begin{align*}
\left|G\right| &\leqslant |X| \cdot  \left( \displaystyle \prod_{i=1}
^{k}  |\aut(T_i)|\right) \leqslant   \left( \displaystyle \prod_{i=1}
^{k} 4 \cdot |\aut(T_i)|\right) \\ & \leqslant  \left( \displaystyle \prod_{i=1}
^{k}  p^{5\beta_i}\right) \leqslant p^{5\cdot (\beta_1+ \cdots + \beta_k)} \leq 
 p^{5\beta}\leq p^{5\alpha} =  m^5.
\end{align*}
If $p\neq 13$, then Corollary~\ref{cor:gur}(2) implies that the constant ``5'' can be replaced by ``4''.
\end{proof}

\section{Examples}\label{sec:ex}

 In this final section, we present two constructions that produce complex examples 
 of groups that have solvable subgroups of prime power index.

\begin{example}\label{exe:01}
Suppose that $G$ is a group and $H$ is a solvable subgroup of $G$ with $|G:H|=p^\alpha$ for some prime $p$ and for $\alpha\geq 1$. Suppose that $K$ is a solvable subgroup of the symmetric group $\sy\ell$ of degree $\ell$. Then the subgroup $H\wr K$ of the wreath product $W=G\wr K$ is solvable and its index is $p^{\ell\alpha}$. Note, in this construction,  that $K$ is not assumed to be transitive. In particular, if $K=1$, then $W=G^\ell$ and its subgroup of $p$-power index is $H^\ell$. 
\end{example}

\begin{example}\label{exe:02}
Let $G$ be a group, let $H\leq G$ be a solvable subgroup with $|G:H|=p^\alpha$ for some prime $p$ and for $\alpha\geq 1$. Suppose that $K$ is a solvable group and $L\leq K$ such that $|K:L|=p^\beta$ with $\beta\geq 1$. Consider $G$ as a group acting on the right coset space $[G:H]$ and define 
the wreath product $W=K\wr G$ with respect to this action; hence $W=K^{p^\alpha}\rtimes G$. Then the subgroup $(L\times K^{p^\alpha-1})\rtimes H$ is a solvable subgroup of $W$ with index $p^{\alpha+\beta}$
\end{example}

\section*{Acknowledgment}
The first author was supported by FAPDF-Brazil. The second author acknowedges the 
support of the CNPq projects {\em Produtividade em Pesquisa} (project no.: 308212/2019-3)  
and {\em Universal} (project no.: 421624/2018-3). 

\bibliographystyle{alpha}
\bibliography{paper}

\end{document}